\keywords{Diophantine problem, power circuit}
\theoremstyle{plain} 
\newcommand{\floor}[1]{\lfloor #1 \rfloor}
\begin{document}

\title[On the Diophantine problem related to power circuits]{On the Diophantine problem related to power circuits}

\author[A.~Rybalov]{Alexander Rybalov}
\address{Sobolev Institute of Mathematics, Pevtsova 13, Omsk 644099, Russia.}	
\email{alexander.rybalov@gmail.com}  
\thanks{2020 \textit{Mathematics Subject Classification.} 11U05, 03D35.} 
\thanks{Supported by Russian Science Foundation, grant 25-11-20023}	





\begin{abstract}
  \noindent Myasnikov, Ushakov, and Won introduced power circuits in 2012 to 
construct a polynomial-time algorithm for the word
problem in the Baumslag group, which has a non-elementary Dehn function.
Power circuits are computational
structures that support addition and the operation
$(x,y) \mapsto x \cdot 2^y$ on integers.
They also posed the question of decidability of the
Diophantine problem over the structure
$\langle \mathbb{N}_{>0}; +, x \cdot 2^y, \leq, 1 \rangle$,
which is closely related to power circuits.
In this paper, we prove that the Diophantine problem over this structure is undecidable.
\end{abstract}

\maketitle

\begin{flushright}
{\it Dedicated to Alexei Miasnikov\\
on the occasion of his birthday.}
\end{flushright}

\section{Introduction}

Power circuits have been introduced by Myasnikov, Ushakov, and Won in \cite{MUW2} 
as computational structures supporting addition and the operation
$(x,y) \mapsto x \cdot 2^y$ on integers.
Using power circuits they constructed in \cite{MUW1} a polynomial-time algorithm for the word
problem in the Baumslag group
$$
G_{(1,2)} = \langle a,b\ |\ b^{-1} a^{-1} ba b^{-1} ab = a^2 \rangle,
$$
which has a non-elementary Dehn function.

Myasnikov, Ushakov, and Won posed in~\cite[Problem~10.3]{MUW2} the question
of decidability of the Diophantine problem over the structure
$$
\widetilde{N} = \langle \mathbb{N}_{>0}; +, x \cdot 2^y, \leq, 1 \rangle,
$$
which is closely related to power circuits. 
Here $\mathbb{N}$ denotes the set
of natural numbers including zero, and $\mathbb{N}_{>k}$ denotes the set of
natural numbers greater than $k$.

Recall that the Diophantine problem $\mathcal{DP}(\widetilde{N})$ over $\widetilde{N}$ is the algorithmic problem of deciding whether
a given equation or a system of equations over $\widetilde{N}$ has a solution.
The classical Diophantine problem $\mathcal{DP}(\mathbb{N})$ over the structure $\langle \mathbb{N}; +, \cdot, 1 \rangle$, known as 
Hilbert's tenth problem, is undecidable, as proved by Matiyasevich in \cite{Mat},
building on earlier work of Davis, Putnam, and Robinson in \cite{DPR}.
On the other hand, Semenov in \cite{Sem} proved decidability of the first-order
theory of natural numbers with addition and exponentiation $\langle \mathbb{N}; +, x \mapsto 2^x, 1 \rangle$. It follows that the Diophantine problem over this structure is decidable.

In this paper, we prove that the Diophantine problem over the structure
$\widetilde{N}$ is undecidable. As a consequence, this resolves
another question posed in~\cite[Problem~10.5]{MUW2}: ``Is $\widetilde{N}$ automatic?'' The answer is negative, since every
automatic structure has a decidable first-order theory~\cite{KN}, and hence a decidable Diophantine problem.

\section{Main result}

Consider the following variant of the Diophantine problem, denoted by
$\mathcal{DP}(\mathbb{N}_{>k})$, that  asks whether a given system of
equations over $\mathbb{N}$ admits a solution in $\mathbb{N}_{>k}$.

\begin{lem} \label{rest_dp}
For every natural number $k$ there is a Cook/Turing reduction from $\mathcal{DP}(\mathbb{N})$ to $\mathcal{DP}(\mathbb{N}_{>k})$.
\end{lem}
\begin{proof}
For a given system of Diophantine equations $S(x_1,\ldots,x_n)$ we 
enumerate all subsets $Y$ of $X = \{ x_1, \ldots, x_n \}$ and 
all possible assignments from $\{ 0, \ldots, k \}$ for the variables in $Y$, 
i.e., functions $f : Y \rightarrow \{ 0, \ldots, k \}$. 
Then for each $f$ we define the system $S_f$ by replacing each variable $y \in Y$ in $S$ with the value $f(y)$.
Denote the resulting finite set of systems by $A(S)$.
It is straightforward to verify that the system $S(x_1,\ldots,x_n)$ has a solution in $\mathbb{N}$ if and only if 
at least one system $S^{\prime} \in A(S)$ has a solution in $\mathbb{N}_{>k}$.
\end{proof}

\begin{cor} \label{dpk_undec}
For every natural number $k$ the problem $\mathcal{DP}(\mathbb{N}_{>k})$ is undecidable.
\end{cor}

Consider the structure $\widetilde{N} = \langle \mathbb{N}_{>0}; +, x \cdot 2^y, \leq, 1 \rangle$.
To prove undecidability of the Diophantine problem over $\widetilde{N}$ we reduce $\mathcal{DP}(\mathbb{N}_{>1})$ 
to $\mathcal{DP}(\widetilde{N})$.
For this purpose it is sufficient to define the multiplication of numbers greater than 1 in $\widetilde{N}$.

A relation $R \subseteq \mathbb{N}_{>1}^k$ is {\it Diophantine definable} in $\widetilde{N}$ if there exists a system of equations
(a conjunction of atomic formulas)
$S(y_1, \ldots, y_k, x_1, \ldots, x_n)$ over $\widetilde{N}$ such that
$$
\forall a_1 \ldots \forall a_k\ R(a_1, \ldots, a_k) \Leftrightarrow 
\exists x_1 \ldots \exists x_n\ S(a_1, \ldots, a_k, x_1, \ldots, x_n).
$$
Also, a function $f : \mathbb{N}_{>1}^k \rightarrow \mathbb{N}_{>1}$ is {\it Diophantine definable} in $\widetilde{N}$ if the graph of
a function $f$ is Diophantine definable in $\widetilde{N}$.

We use notation $a\ |\ b$ for $a,b \in \mathbb{N}$ if $a$ divides $b$.

\begin{lem} \label{power2}
For all $n,m \in \mathbb{N}$ the following holds:
$$
m\ |\ n \Leftrightarrow 2^m-1\ |\ 2^n - 1.
$$
\end{lem}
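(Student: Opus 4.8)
The plan is to prove both directions of the equivalence directly, using the elementary arithmetic of the polynomial (or geometric-series) identity for $2^n - 1$.

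First I would handle the forward direction: if $m \mid n$, write $n = mq$ for some natural number $q$ and observe that $2^n - 1 = (2^m)^q - 1$. The standard factorization $x^q - 1 = (x-1)(x^{q-1} + x^{q-2} + \cdots + x + 1)$ with $x = 2^m$ then exhibits $2^m - 1$ as an explicit divisor of $2^n - 1$. This direction requires only the algebraic identity and is essentially immediate.

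The reverse direction is the one that needs a little more care, and is where I expect the main (mild) obstacle to lie. The clean approach is via the division algorithm: write $n = mq + r$ with $0 \le r < m$, and compute $2^n - 1 \bmod (2^m - 1)$. Since $2^m \equiv 1 \pmod{2^m - 1}$, we get $2^n = 2^{mq+r} = (2^m)^q \cdot 2^r \equiv 2^r \pmod{2^m-1}$, hence $2^n - 1 \equiv 2^r - 1 \pmod{2^m - 1}$. Now if $2^m - 1 \mid 2^n - 1$, then $2^m - 1 \mid 2^r - 1$; but $0 \le r < m$ forces $0 \le 2^r - 1 < 2^m - 1$, so the only way $2^m - 1$ can divide $2^r - 1$ is if $2^r - 1 = 0$, i.e. $r = 0$. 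Therefore $m \mid n$. The only subtlety is checking the degenerate cases (for instance $m = 0$, where $2^m - 1 = 0$), which should be dispatched with a sentence confirming the statement holds vacuously or trivially there.

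Altogether the argument reduces to the congruence $2^n \equiv 2^{n \bmod m} \pmod{2^m - 1}$ together with the size bound $2^r - 1 < 2^m - 1$ for $r < m$; the forward implication is the factorization and the reverse is this modular reduction. I would present the reverse direction as the core computation, since the forward direction is a one-line consequence of the geometric-series factorization.
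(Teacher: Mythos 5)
Your proposal is correct and follows essentially the same route as the paper: the forward direction is the geometric-series factorization of $(2^m)^q-1$, and the reverse direction is the paper's explicit identity $2^n-1 = 2^r(2^{km}-1) + (2^r-1)$ recast in congruence notation, finished by the same size bound $0 \le 2^r - 1 < 2^m - 1$. Your extra sentence on the degenerate case $m=0$ is a small point of added care not present in the paper, but it does not change the argument.
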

\begin{proof}
Suppose $m$ divides $n$ and $n = km$ for some $k \in \mathbb{N}$. Then
$$
2^n - 1 = 2^{km} - 1 = (2^m - 1)(2^{m (k-1)} + \ldots + 2^m + 1).
$$
Suppose $m$ does not divide $n$ and $n = km + r$ with natural $k$ and $0 < r < m$. Then
$$
2^n - 1 = 2^{km+r} - 2^r + 2^r - 1 = 2^r (2^{km} - 1) + 2^r - 1
$$
is not divisible by $2^m - 1$ since $2^m - 1$ divides $2^{km} - 1$ and $2^m - 1 > 2^r - 1 > 0$.
\end{proof}

\begin{lem} \label{div_def}
The divisibility relation $x\ |\ y$ is Diophantine definable in $\widetilde{N}$.
\end{lem}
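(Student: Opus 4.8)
The plan is to express divisibility purely through the operation $u \cdot 2^{v}$ and addition, using Lemma~\ref{power2} to transfer the question from $x \mid y$ to $2^{x}-1 \mid 2^{y}-1$. The point of this transfer is that the latter divisibility, unlike the former, can be witnessed by a quotient whose defining relation involves no genuine multiplication, only the special operation that the structure does provide.

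First I would unfold the meaning of $2^{x}-1 \mid 2^{y}-1$: it holds exactly when there is a positive integer $Q$ with $Q \cdot (2^{x}-1) = 2^{y}-1$. Since $\tilde{N}$ has neither subtraction nor general multiplication, the task is to rewrite this relation using only $+$, the constant $1$, and the operation $u \cdot 2^{v}$. Expanding and moving the negative terms to the opposite side turns it into $Q \cdot 2^{x} + 1 = 2^{y} + Q$. Here $Q \cdot 2^{x}$ is a single application of the operation to the pair $(Q,x)$ and $2^{y} = 1 \cdot 2^{y}$ is another application to the pair $(1,y)$, so the whole identity is one atomic equation over $\tilde{N}$, requiring no auxiliary variables beyond $Q$.

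I would therefore propose the definition
$$
x \mid y \;\Longleftrightarrow\; \exists Q\ \bigl(Q \cdot 2^{x} + 1 = 1 \cdot 2^{y} + Q\bigr)
$$
and verify the two directions. For the forward direction, if $x \mid y$ then Lemma~\ref{power2} gives $2^{x}-1 \mid 2^{y}-1$, so $Q = (2^{y}-1)/(2^{x}-1)$ is an integer, and the displayed equation is just the rearranged divisibility identity. For the backward direction, any $Q$ satisfying the equation yields $Q \cdot (2^{x}-1) = 2^{y}-1$, hence $2^{x}-1 \mid 2^{y}-1$, and Lemma~\ref{power2} returns $x \mid y$.

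The step needing the most care is the domain restriction: every existentially quantified variable ranges over $\mathbb{N}_{>0}$, so I must confirm that the witness $Q$ is strictly positive rather than zero. Since $x,y \in \mathbb{N}_{>1}$ and $x \mid y$ force $y \geq x$, the quotient $Q = (2^{y}-1)/(2^{x}-1)$ is at least $1$ and hence a legitimate element of the domain. This is the only subtle point; the remainder is the elementary algebra of the rearrangement together with a direct appeal to Lemma~\ref{power2}.
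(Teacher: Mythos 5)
Your proposal is correct and is essentially identical to the paper's proof: the paper also rewrites $2^y-1 = z(2^x-1)$ as the subtraction-free atomic equation $2^y + z = z\cdot 2^x + 1$ and appeals to Lemma~\ref{power2}. Your extra check that the witness $Q$ is strictly positive is a sound (and welcome) detail the paper leaves implicit.
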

\begin{proof}
By Lemma \ref{power2}
$$
x\ |\ y \Leftrightarrow \exists z\ 2^y-1 = z (2^x - 1) \Leftrightarrow \exists z\ 2^y + z = z \cdot 2^x + 1.
\eqno\qed
$$\let\qed\relax
\end{proof}

Robinson proved in \cite{Rob} that the first-order theory of natural numbers with addition and divisibility relation
is undecidable. But Beltjukov in \cite{Bel} and Lipshitz in \cite{Lip} proved that the Diophantine problem over this structure
is decidable. Therefore, the Diophantine definability of the divisibility relation is not sufficient to prove the undecidability of the Diophantine problem over $\widetilde{N}$ and we need further research.

\begin{lem} \label{str_def}
The strict order relation $x < y$ is Diophantine definable in $\widetilde{N}$.
\end{lem}
\begin{proof}
Note that
$x < y \Leftrightarrow \exists z\ x+z = y$.
\end{proof}

We use notation $\floor{a}$ to denote the integer part of a real number $a$.

\begin{lem} \label{log_def}
The integer binary logarithm $\floor{\log_2{x}}$ for $x>1$ is Diophantine definable in $\widetilde{N}$.
\end{lem}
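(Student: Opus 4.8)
The plan is to define $y=\floor{\log_2 x}$ via the characterizing inequalities $2^y \leq x < 2^{y+1}$, and then to express these inequalities within $\tilde{N}$. Since strict order $<$ is already Diophantine definable by Lemma \ref{str_def} and the non-strict order $\leq$ is a primitive of the structure, the entire task reduces to producing the quantities $2^y$ and $2^{y+1}$ as terms. The key observation is that the structure contains the operation $x \cdot 2^y$, so applying it to the constant $1$ gives $1 \cdot 2^y = 2^y$; thus powers of two are readily available once we have a handle on the exponent. More precisely, I expect to introduce an auxiliary existentially quantified variable, say $w$, and to encode $w = 2^y$ together with $2w = 2^{y+1}$ and the sandwich relation $w \leq x < 2w$.

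The main steps are as follows. First I would write, for $x > 1$, the equivalence
$$
\floor{\log_2 x} = y \Leftrightarrow \exists w\ \bigl( w = 1 \cdot 2^y \ \wedge\ w \leq x \ \wedge\ x < w + w \bigr),
$$
using that $2^{y+1} = 2 \cdot 2^y = w + w$ so that the upper bound $x < 2^{y+1}$ becomes $x < w+w$, which is an instance of the Diophantine-definable strict order from Lemma \ref{str_def}. The atomic formula $w = 1 \cdot 2^y$ is literally an application of the primitive operation $x \cdot 2^y$ with $x$ replaced by the constant $1$, and $w \leq x$ is primitive. Finally I would package these conjuncts into a single system $S(y, x, w)$ of atomic formulas, confirming that $y$ appears only as the desired value and that the existential quantifier over $w$ matches the definition of Diophantine definability given earlier.

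The one point requiring care—and the closest thing to an obstacle—is the boundary behaviour and the domain restriction to $\mathbb{N}_{>0}$. I must verify that the defining system correctly forces $y = \floor{\log_2 x}$ for every $x > 1$ and no spurious values of $y$: the conjunction $w \leq x < w+w$ with $w = 2^y$ pins down $y$ uniquely because the dyadic intervals $[2^y, 2^{y+1})$ partition the integers greater than or equal to $1$. I should also check that $y$ itself lands in $\mathbb{N}_{>0}$ (so that it is a legitimate element of the structure); since $x > 1$ we have $\floor{\log_2 x} \geq 1$, so this holds, but the case $x = 1$ giving $y = 0$ is exactly why the hypothesis $x > 1$ is imposed. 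Beyond this verification, the argument is a direct assembly of previously established definable relations together with the primitive operation, so I expect no substantial difficulty.
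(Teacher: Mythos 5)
Your proposal is correct and follows exactly the paper's route: the paper defines $y=\floor{\log_2 x}$ by the same sandwich $(2^y\leq x)\wedge(x<2^{y+1})$, with $2^y$ realized as the term $1\cdot 2^y$ and the strict inequality handled via Lemma \ref{str_def}. You merely spell out the routine details (writing $2^{y+1}$ as $w+w$ and checking the boundary case) that the paper leaves implicit.
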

\begin{proof}
Note that
$$
y = \floor{\log_2 {x}} \Leftrightarrow (2^y \leq x) \wedge (x < 2^{y+1}).
\eqno\qed
$$\let\qed\relax
\end{proof}

\begin{lem} \label{sqr_def}
The function $sq(x) = x^2$ for $x>1$ is Diophantine definable in $\widetilde{N}$.
\end{lem}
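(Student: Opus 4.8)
The plan is to characterize $y = x^2$ through the base-$2^x$ \emph{repunit}
$$
S = \sum_{i=0}^{k-1} 2^{ix} = \frac{2^y - 1}{2^x - 1}, \qquad k = \frac{y}{x},
$$
together with a single congruence that pins down its number of terms $k$. First I would introduce $S$ by the equation
$$
2^y + S = S \cdot 2^x + 1,
$$
which is atomic over $\tilde{N}$ (it uses only $+$, the constant $1$, and the operation $S \cdot 2^x$) and is equivalent to $S(2^x - 1) = 2^y - 1$. By Lemma~\ref{power2} such an $S \in \mathbb{N}_{>0}$ exists if and only if $x \mid y$, and in that case $S$ is uniquely determined and equals the repunit $\sum_{i=0}^{k-1} 2^{ix}$ with $k = y/x$.

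The key observation is that $2^x \equiv 1 \pmod{2^x - 1}$, whence
$$
S = \sum_{i=0}^{k-1} 2^{ix} \equiv k \pmod{2^x - 1}.
$$
Hence I would add the divisibility condition $(2^x - 1) \mid (S - x)$, which is Diophantine by Lemma~\ref{div_def} after introducing auxiliary variables $a, b$ with $a + 1 = 2^x$ and $b + x = S$ and asserting $a \mid b$; this forces $k \equiv x \pmod{2^x - 1}$. To remove the remaining ambiguity I would impose the bound $y < x \cdot 2^x$, expressible by Lemma~\ref{str_def} using the term $x \cdot 2^x$; since $y = kx$ and $x > 0$, this is exactly $k < 2^x$.

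Finally I would argue uniqueness: for $x > 1$ one has $1 \le x \le 2^x - 2$, so in the interval $[1, 2^x)$ the congruence class of $x$ modulo $2^x - 1$ is realized by the single value $k = x$. Together with $k = y/x$ this yields $y = x^2$, and conversely the repunit attached to $y = x^2$ satisfies all the imposed conditions. The main obstacle is precisely this last uniqueness step: the repunit equation on its own captures only the divisibility $x \mid y$, which by the Beltjukov--Lipshitz results quoted above is far too weak to define squaring; the entire strength of the definition must therefore come from combining the congruence $k \equiv x \pmod{2^x-1}$ with the bound $k < 2^x$ to isolate $k = x$.
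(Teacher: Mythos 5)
Your proof is correct, but it takes a genuinely different route from the paper. You exploit the exponential operation directly: the auxiliary quantity $S=(2^y-1)/(2^x-1)$, introduced by the atomic equation $2^y+S=S\cdot 2^x+1$, is the base-$2^x$ repunit with $k=y/x$ digits, and since $2^x\equiv 1 \pmod{2^x-1}$ it satisfies $S\equiv k \pmod{2^x-1}$; a single congruence $(2^x-1)\mid(S-x)$ plus the single bound $y<x\cdot 2^x$ (i.e.\ $k<2^x$) then isolates $k=x$, because $\{1,\dots,2^x-1\}$ is a complete residue system modulo $2^x-1$ and $x$ lies in it for $x>1$. The paper instead never looks at the quotient $S$: it characterizes $x^2$ as the smallest element of $\{kx(x+1)-x : k\ge 1\}$ via the bound $\floor{\log_2 y}\le 2\floor{\log_2 x}+1$ (using Lemma~\ref{log_def}), which leaves the three candidates $x^2$, $2x^2+x$, $3x^2+2x$, and then kills the two spurious ones by extra divisibilities modulo $x+2$ and $x+3$. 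Your version avoids all of that casework and does not need Lemma~\ref{log_def} at all, at the cost of leaning harder on the arithmetic of $2^y-1$; the paper's version uses only ``small'' moduli $x,x+1,x+2,x+3$ and is closer in spirit to classical Diophantine coding. Two small points you should make explicit: the auxiliary variables must live in $\mathbb{N}_{>0}$, so you need $S-x>0$ (true in the intended solution since $S\ge 1+2^x>x$, and requiring it only strengthens the system), and $2^x-1\ge 3$ so the divisor is a legitimate element of the structure.
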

\begin{proof}
The set 
$$
S(x) = \{ k x(x+1)\ :\ k \in \mathbb{N} \}
$$
is Diophantine definable in $\widetilde{N}$, since
$$
y \in S(x) \Leftrightarrow (x\ |\ y) \wedge (x+1\ |\ y).
$$
Now consider the following Diophantine definable set in $\widetilde{N}$:
$$
S^{\prime}(x) = \{ y\ :\ y + x \in S(x),\ \floor{\log_2 {y}} \leq 2 \floor{\log_2 {x}} + 1 \}.
$$
If $k \geq 4$, then
$$
\floor{\log_2 ( k x(x+1) - x ) } = \floor{\log_2 (k x^2 + (k-1)x) } \geq \floor{\log_2 (k x^2) } = 
$$
$$
= \floor{ \log_2 {k} + 2 \log_2 x } \geq \floor{ 2 + 2 \log_2 {x} } \geq 2 + \floor{ 2 \log_2 x } \geq 2 + 2 \floor{ \log_2 x}.
$$
Hence, for every $x>1$ we have
$$
S^{\prime}(x) \subseteq \{ x^2, 2x^2+x, 3x^2+2x \}.
$$
Note that $x^2 \in S^{\prime}(x)$ since
$$
\floor{ \log_2 (x^2)} = \floor{2 \log_2 x} \leq 2 \floor{ \log_2 x} + 1.
$$
To ensure that $2x^2+x$ and $3x^2+2x$ do not belong to $S^{\prime}(x)$,
consider the following Diophantine over $\widetilde{N}$ condition:
$$
P(x, y) = (x+2\ |\ y + 2x) \wedge (x+3\ |\ y + 3x).
$$
Element $x^2$ satisfies this condition because $x+2$ divides $x^2+2x$ and $x+3$ divides $x^2+3x$.
On the other hand, $2x^2+x + 2x = (2x-1)(x+2) + 2$ does not satisfy this condition, because
it is not divisible by $x+2$ for all natural $x$.
Similarly, $3x^2+2x + 2x = (3x-2)(x+2) + 4$ does not belong to $P$, because
it is divisible by $x+2$ only for $x=2$, but for $x=2$ we see that $3x^2+2x + 3x = 22$
is not divisible by $x+3 = 5$.
\end{proof}

\begin{lem} \label{mult_def}
The multiplication operation $mul(x,y) = xy$ for $x,y>1$ is Diophantine definable in $\widetilde{N}$.
\end{lem}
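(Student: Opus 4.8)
The plan is to reduce multiplication to squaring via the polarization identity
$(x+y)^2 = x^2 + 2xy + y^2$, exploiting the fact that the squaring operation $sq$ is
already Diophantine definable by Lemma \ref{sqr_def}. Rearranging the identity gives
$2xy = (x+y)^2 - x^2 - y^2$, which avoids any division and can be written purely with
addition and squaring. The product $xy$ is then pinned down by introducing it as a
variable $z$ and doubling it through the primitive addition, writing $z+z$ rather than a
constant multiple, since multiplication by the constant $2$ is not yet available to us.

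Concretely, I would first observe that for $x,y > 1$ each of the arguments $x$, $y$, and
$x+y$ lies in $\mathbb{N}_{>1}$, so Lemma \ref{sqr_def} legitimately applies to all three.
I introduce auxiliary variables and set $u = sq(x)$, $v = sq(y)$, and $w = sq(x+y)$; each of
these three equalities is itself a Diophantine condition, so substituting their defining
systems and conjoining them keeps the whole formula a single system of equations over
$\tilde{N}$. The defining condition for the product is then
$$
z = mul(x,y) \Leftrightarrow u = sq(x) \wedge v = sq(y) \wedge w = sq(x+y) \wedge w = u + v + z + z .
$$

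To see correctness, I expand $w = (x+y)^2 = x^2 + 2xy + y^2 = u + v + 2xy$, so the final
equation $w = u + v + z + z$ holds precisely when $z + z = 2xy$, that is, when $z = xy$.
The only thing requiring care is the bookkeeping that nothing leaves the domain
$\mathbb{N}_{>1}$: from $x,y > 1$ we get $x + y > 1$ and $z = xy \geq 4 > 1$, so every term
appearing is a genuine element of the structure and every invocation of $sq$ is on an
argument greater than $1$. I do not expect a genuine obstacle here; once squaring is in
hand, the polarization identity carries the entire argument, and the doubling trick $z+z$
sidesteps the absence of constant coefficients.
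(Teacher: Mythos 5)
Your proof is correct and is essentially identical to the paper's: both use the polarization identity $(x+y)^2 = x^2 + 2xy + y^2$, rearranged as $z+z+x^2+y^2 = (x+y)^2$ to avoid subtraction and constant coefficients, relying on Lemma \ref{sqr_def} for the squares. Your additional check that $x$, $y$, $x+y$, and $z=xy$ all stay in $\mathbb{N}_{>1}$ is a welcome bit of care the paper leaves implicit.
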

\begin{proof}
Note that
$$
z = xy \Leftrightarrow 2z = (x+y)^2 - x^2 - y^2 \Leftrightarrow z + z + x^2 + y^2 = (x+y)^2.
\eqno\qed
$$\let\qed\relax
\end{proof}

\begin{thm} \label{main}
The Diophantine problem over $\widetilde{N} = \langle \mathbb{N}_{>0}; +, x \cdot 2^y, \leq, 1 \rangle$ is undecidable.
\end{thm}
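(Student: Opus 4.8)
The plan is to reduce the restricted classical problem $\mathcal{DP}(\mathbb{N}_{>1})$, which is undecidable by Lemma \ref{rest_dp} (take $k=1$), to the Diophantine problem over $\tilde{N}$. The entire engine of the reduction is already assembled in the preceding lemmas: the force of Lemma \ref{mult_def} is that ordinary multiplication of two arguments greater than $1$ is Diophantine definable in $\tilde{N}$, and its definition bottoms out, through Lemmas \ref{sqr_def}, \ref{log_def}, \ref{str_def} and \ref{div_def}, in the primitive symbols $+$, $x \cdot 2^y$, $\leq$ and $1$ (for instance $2^y = 1 \cdot 2^y$ and $z \cdot 2^x$ are instances of the basic operation). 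What remains is therefore the routine, but careful, assembly of a single system of atomic formulas over $\tilde{N}$ from an arbitrary polynomial system over $\mathbb{N}_{>1}$.

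First I would take an input system $S(x_1, \ldots, x_n)$ for $\mathcal{DP}(\mathbb{N}_{>1})$ and rewrite each equation, by moving negative monomials to the opposite side, into the form $P_1 = P_2$ with $P_1, P_2$ polynomials having non-negative integer coefficients. I would then flatten each such equation into a conjunction of atomic operations by introducing fresh auxiliary variables: every product $x_{i_1} \cdots x_{i_d}$ of variables is built by a chain of binary multiplications $w \leftarrow u \cdot v$; every integer coefficient $c$ and every constant term is realized by repeated addition of the constant $1$, so that no multiplication by a scalar is ever needed; and the resulting monomials are summed by further additions. To each original variable I would adjoin the constraint $1 < x_i$, expressed via Lemma \ref{str_def}, forcing $x_i \ge 2$.

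The crucial bookkeeping is the invariant that \emph{every} multiplication node created in this way has both arguments greater than $1$: the original variables are $\ge 2$ by the adjoined constraints, and a product or sum of values that are $\ge 2$ is again $\ge 2$, so the hypotheses of Lemmas \ref{mult_def} and \ref{sqr_def} are met at each node (and $u+v \ge 2$ guarantees the squaring of $u+v$ inside the multiplication definition is licit as well). This is the only delicate point, and it is precisely why the reduction starts from $\mathcal{DP}(\mathbb{N}_{>1})$ and not $\mathcal{DP}(\mathbb{N})$: it ensures that scalar coefficients, which may be $0$ or $1$, are never fed into the definable multiplication, but are handled by addition alone. Replacing each multiplication node $w = u \cdot v$ by the defining system of Lemma \ref{mult_def}, and in turn each squaring, logarithm and divisibility subformula by its own definition, yields a single system $S'$ of atomic formulas over $\tilde{N}$.

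Finally I would verify the equivalence: any assignment in $\mathbb{N}_{>1}$ satisfying $S$ extends to the auxiliary variables so as to satisfy $S'$ over $\mathbb{N}_{>0} = \mathbb{N}_{>1} \cup \{1\}$, and conversely any solution of $S'$ restricts on $x_1, \ldots, x_n$ to a solution of $S$ in $\mathbb{N}_{>1}$, since the constraints $1 < x_i$ confine the original variables to $\mathbb{N}_{>1}$ while the auxiliary variables then correctly compute the intended products and sums. As $S \mapsto S'$ is a computable transformation and $\mathcal{DP}(\mathbb{N}_{>1})$ is undecidable, undecidability of the Diophantine problem over $\tilde{N}$ follows. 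I expect the main obstacle to be the careful verification of the ``$\ge 2$ is preserved'' invariant along the straight-line computation, which is exactly what licenses every appeal to Lemma \ref{mult_def}.
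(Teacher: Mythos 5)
Your proposal is correct and follows essentially the same route as the paper: reduce from $\mathcal{DP}(\mathbb{N}_{>1})$ (Lemma \ref{rest_dp} with $k=1$), flatten the input system into binary additions and multiplications (the paper's ``Skolem form''), replace each multiplication by the definable operation of Lemma \ref{mult_def}, and adjoin $x>1$ constraints. Your explicit bookkeeping of the invariant that every multiplication node has both arguments greater than $1$ is just a slightly more careful rendering of the paper's step of adding $x>1$ for the variables not already constrained by a multiplication equation.
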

\begin{proof}
We reduce $\mathcal{DP}(\mathbb{N}_{>1})$ to the Diophantine problem over $\widetilde{N}$ in the following way.
Without loss of generality any given nontrivial system $S$ of Diophantine equations over $\mathbb{N}_{>1}$ 
consists of equations of the form $P(x_1, \ldots, x_n) = Q(x_1, \ldots, x_n)$,
where $P$ and $Q$ are non-zero polynomials with positive integer coefficients.
Such system $S$ can be transformed to an equivalent system in the {\it Skolem form}, consisting of equations of
the following types:
\begin{enumerate}
\item $x_i = x_j x_k$,
\item $x_i = x_j + x_k$,
\item $x_i = x_j + 1$,
\item $x_i = x_j$.
\end{enumerate}
By Lemma \ref{mult_def}, we can replace every equation of type (1) by an equivalent system of equations over $\widetilde{N}$.
Also for every variable $x$, which is included in equations of types (2), (3) or (4), but not included in any equation of type (1),
we add the Diophantine condition $x>1$ using Lemma \ref{str_def}. 
By construction, the obtained system of equations over $\widetilde{N}$ is equivalent to the original system $S$ over $\mathbb{N}_{>1}$.

By Corollary \ref{dpk_undec}, the problem $\mathcal{DP}(\mathbb{N}_{>1})$ is undecidable;
therefore, $\mathcal{DP}(\widetilde{N})$ is also undecidable.
\end{proof}

Since every automatic structure has a decidable first-order theory~\cite{KN},
we obtain the following corollary of Theorem \ref{main}.

\begin{cor}
$\widetilde{N}$ is not automatic.
\end{cor}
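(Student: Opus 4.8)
The plan is to argue by contraposition, using the undecidability established in Theorem \ref{main} together with the cited result of \cite{KN} that every automatic structure has a decidable first-order theory. The only feature of automaticity I need is precisely this consequence, so I would not unpack the automata-theoretic definition beyond recalling that an automatic structure is one whose domain and whose relations and operations admit synchronous finite-automaton presentations over a common alphabet; everything downstream uses only decidability of the full first-order theory.

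The substantive observation is that the Diophantine problem over $\tilde{N}$ is a uniform subproblem of the first-order decision problem for $\tilde{N}$. Indeed, a system of equations $S(x_1,\dots,x_n)$ over $\tilde{N}$ is solvable if and only if the existential sentence $\exists x_1 \cdots \exists x_n\, S$ holds in $\tilde{N}$, and since each atomic formula of $S$ is built only from the signature symbols $+$, $x \cdot 2^y$, $\leq$, and $1$, this sentence is a genuine first-order sentence over the signature of $\tilde{N}$ interpreted in the domain $\mathbb{N}_{>0}$. Hence any decision procedure for the first-order theory of $\tilde{N}$ restricts, on existential sentences of this shape, to a decision procedure for the Diophantine problem over $\tilde{N}$.

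Combining these gives the contradiction: if $\tilde{N}$ were automatic, then by \cite{KN} its first-order theory would be decidable, and by the previous paragraph the Diophantine problem over $\tilde{N}$ would be decidable as well, contradicting Theorem \ref{main}. Therefore $\tilde{N}$ is not automatic. The argument is short, and the point deserving care — rather than a genuine obstacle — is simply to confirm that translating a Diophantine system into an existential first-order sentence stays within the signature of $\tilde{N}$ and that solvability matches satisfaction of that sentence in the structure; no number theory or combinatorics beyond Theorem \ref{main} enters.
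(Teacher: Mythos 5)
Your proof is correct and follows exactly the paper's argument: automaticity would give a decidable first-order theory by \cite{KN}, hence a decidable Diophantine problem (existential sentences being a fragment of the theory), contradicting Theorem \ref{main}. Your extra care in checking that Diophantine systems translate into genuine existential sentences over the signature of $\tilde{N}$ is a sound elaboration of a step the paper leaves implicit.
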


\section*{Acknowledgment}
  \noindent The author thanks Alexander Ushakov for many useful suggestions and remarks.

\bibliographystyle{plain}
\bibliography{rybalov}

\end{document}